\Crefname{equation}{Eq.}{Eqs.}
\Crefname{figure}{Fig.}{Figs.}
\Crefname{efigure}{Extended Data Fig.}{Extended Data Figs.}
\Crefname{tabular}{Tab.}{Tabs.}
\Crefname{linenum}{line}{lines}
\title{Eigenvalue spectrum support of\\paired random matrices with pseudo-inverse}
\author{ \href{https://orcid.org/0000-0001-8161-0430}{\includegraphics[scale=0.06]{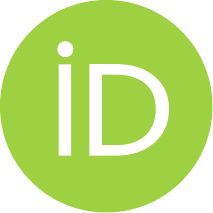}\hspace{1mm}Uri Cohen}\thanks{Corresponding author} \\
        Computational and Biological Learning Lab,\\
        Dept. of Engineering, University of Cambridge\\
        Cambridge, UK\\
        \texttt{uc231@cam.ac.uk}
}
\newtheorem{definition}{Definition}
\newtheorem{theorem}{Theorem}
\newtheorem{corollary}{Corollary}
\newtheorem{conjecture}{Conjecture}
\begin{document}
\maketitle

\begin{abstract}
The Moore-Penrose pseudo-inverse $X^\dagger$, defined for rectangular matrices, naturally emerges in many areas of mathematics and science. For a pair of rectangular matrices $X, Y$ where the corresponding entries are jointly Gaussian and i.i.d., we analyse the support of the eigenvalue spectrum of $XY^\dagger$.
\end{abstract}

\keywords{Random Matrix Theory \and Moore-Penrose pseudo-inverse \and Eigenvalue Spectrum}

\section{Introduction}
We are interested in pairs of rectangular random matrices of equal size where their corresponding elements are independent, identically distributed (i.i.d.) from a 2-D joint distribution.
\begin{definition}[Real Paired Gaussian Matrices]\label{def1}
For $N, P\in\mathbb{N}$ and covariance matrix $\Sigma\in\mathbb{R}^{2\times2}$, Real Paired Gaussian Matrices are a pair of real rectangular matrices of $X, Y\in\mathbb{R}^{N\times P}$ where corresponding entries $x=X_{i\mu},y=Y_{i\mu}$ for any $i=1\ldots N$, $\mu=1\ldots P$, are jointly i.i.d.\ Gaussian $\left(x,y\right)\sim{\cal N}\left(0,\Sigma/N\right)$.
\end{definition}

\begin{definition}[Complex Paired Gaussian Matrices with independent components]\label{def2}
For $N, P\in\mathbb{N}$, $\Sigma$ as above, Complex Paired Gaussian Matrices with independent components are complex rectangular matrices $X, Y\in \mathbb{C}^{N\times P}$ such that $\mathrm{Re}\left(X\right), \mathrm{Re}\left(Y\right)$ are real paired Gaussian matrices with covariance $\Sigma_{Re}$, $\mathrm{Im}\left(X\right), \mathrm{Im}\left(Y\right)$ are real paired Gaussian matrices with covariance $\Sigma_{Im}$, and satisfy $\Sigma_{Re}+\Sigma_{Im}=\Sigma$ (i.e., the real and imaginary components are independent). 
\end{definition}

\begin{definition}[Complex Paired Gaussian Matrices]\label{def3}
For $N, P\in\mathbb{N}$, covariance matrix $\Gamma\in\mathbb{R}^{4\times4}$,
Complex Paired Gaussian Matrices are a pair of complex rectangular matrices of $X, Y\in\mathbb{C}^{N\times P}$ where corresponding entries $x=X_{i\mu},y=Y_{i\mu}$ 
are jointly i.i.d.\ Gaussian $\left(\mathrm{Re}x,\mathrm{Im},\mathrm{Re}y,\mathrm{Im}y\right)\sim{\cal N}\left(0,\Gamma/N\right)$.
\end{definition}

Note that \Cref{def2} generalises \Cref{def1} as it correspond to the case of $\Sigma_\mathrm{Im} = 0$. Without loss of generality,
$$\Sigma=\mathrm{Var}\left(x,y\right)=\begin{pmatrix}
\sigma_{x}^{2} & \tau\sigma_{x}\sigma_{y}\\
\bar{\tau}\sigma_{x}\sigma_{y} & \sigma_{y}^{2}
\end{pmatrix}$$
for $\sigma_x,\sigma_y\in\mathbb{R}^+$ and $\left|\tau\right|\le1$, where $\tau\in\mathbb{R}$ for the first two definitions and $\tau\in\mathbb{C}$ for the third definition.

Denoting the dimensions ratio $\alpha=P/N$, we consider a matrix $M\in\mathbb{C}^{N\times N}$ defined from paired Gaussian matrices $X, Y$ using either a conjugate transpose $M=XY^*$ (a scenario previously discussed under the name ``non-Hermitian Wishart ensemble'' \cite{akemann2021non}) or a pseudo-inverse \cite{penrose1955generalized} $M=XY^{\dagger}$ and wish to calculate the support of the limiting spectral density of $M$ in terms of $\sigma_x,\sigma_y,\tau,\alpha$, namely the set with positive density for $N\to\infty$. 

\section{Results}
We denote the empirical spectral density of a matrix $M_N\in\mathbb{C}^{N\times N}$ as $\mu^N_M\left(\omega\right)=\frac{1}{N}\sum_i^N\delta\left(\omega-\lambda_i\left(M_N\right)\right)$. If a series of such measures converges weakly to a limiting spectral density, we denote it $\mu_M\left(\omega\right)$, and denote its 
support ${\cal S}_M=\left\{\omega:\mu_M\left(\omega\right)>0\right\}$, with a slight abuse of notation, as both $\mu_M$ and ${\cal S}_M$ are not defined for a specific matrix $M$.

\begin{theorem}\label{th1}
For complex paired Gaussian matrices with independent components $X, Y$, the support of the limiting spectral density of $M=XY^*$ is:
\begin{equation}\label{eq:support1}
    {\cal S}_{XY^*} = \left\{0\right\}_{\alpha<1}\cup\left\{\lambda:\left(\frac{\mathrm{Re}\lambda-\sigma_x\sigma_y\left(1+\alpha\right)\mathrm{Re}\left(\tau\right)}{\sigma_x\sigma_y\sqrt{\alpha}\left(1+\left|\tau\right|^2\right)}\right)^2 + \left(\frac{\mathrm{Im}\lambda-\sigma_x\sigma_y\left(1+\alpha\right)\mathrm{Im}\left(\tau\right)}{\sigma_x\sigma_y\sqrt{\alpha}\left(1-\left|\tau\right|^2\right)}\right)^2\le1\right\}
\end{equation}
\end{theorem}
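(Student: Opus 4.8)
\noindent\emph{Proof strategy.} The atom $\{0\}_{\alpha<1}$ is immediate: for $\alpha<1$ we have $P<N$, so $\operatorname{rank}(XY^*)\le P$ forces $0$ to be an eigenvalue of $M$ with multiplicity at least $N-P$, hence $\mu_M$ has an atom of mass at least $1-\alpha$ at the origin and $0\in{\cal S}_M$. For $\alpha\ge1$, $M$ is a.s.\ invertible and the origin carries no atom, so whether $0\in{\cal S}_M$ is then decided by the continuous part. The rest of the proof is devoted to that continuous part.

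For it I would use Girko's Hermitization. Away from atoms $\mu_M$ coincides with the Brown measure $\mu_M=\tfrac1{2\pi}\Delta_\lambda\Phi(\lambda)$ (distributionally), where $\Phi(\lambda)=\lim_N\tfrac1N\log\lvert\det(M-\lambda I_N)\rvert=\int_0^\infty\log t\,d\nu_\lambda(t)$ and $\nu_\lambda$ is the limiting symmetrized singular-value law of $M-\lambda I_N$; equivalently one forms the $2N\times2N$ Hermitization ${\cal H}_\lambda=\bigl(\begin{smallmatrix}0&M-\lambda I_N\\ M^*-\bar\lambda I_N&0\end{smallmatrix}\bigr)$ and studies $\tfrac1{2N}\operatorname{tr}({\cal H}_\lambda-i\eta)^{-1}$ as $\eta\downarrow0$, with $\lambda$ in the continuous support exactly where the density of $\nu_\lambda$ at the origin is positive. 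Since $M=XY^*$ is a product, the first real step is to \emph{linearize}: $M-\lambda I_N$ has the same nonzero singular values as a block bilinear form in $X,Y$, so ${\cal H}_\lambda-i\eta$ is represented by a Wigner-type (Kronecker) random matrix of size $O(N+P)$ that is affine in $(X,X^*,Y,Y^*)$. Its covariance profile is fixed by the Hermitian second moments only --- $\mathbb{E}\lvert X_{i\mu}\rvert^2=\sigma_x^2/N$, $\mathbb{E}\lvert Y_{i\mu}\rvert^2=\sigma_y^2/N$, $\mathbb{E}[X_{i\mu}\overline{Y_{i\mu}}]=\tau\sigma_x\sigma_y/N$ --- because in the product/Hermitization only the pairings $X\!\leftrightarrow\!X^*$, $Y\!\leftrightarrow\!Y^*$ and $X\!\leftrightarrow\!Y^*$ arise; the ``anomalous'' moments $\mathbb{E}[X_{i\mu}^2]$ etc.\ never enter, which is why ${\cal S}_{XY^*}$ depends on $\Sigma$ alone and not on the split $\Sigma_{Re}+\Sigma_{Im}=\Sigma$. (Heuristically, when the linear model $Y=cX+Z$ with $Z$ independent of $X$ applies --- e.g.\ for \Cref{def1}, with $c=\tau\sigma_y/\sigma_x$ --- one has $M=c\,XX^*+XZ^*$: the Wishart part $c\,XX^*$ already supplies the centre $\sigma_x\sigma_y(1+\alpha)\tau$ and a real spread, the term $XZ^*$ supplies the transverse extent, and their lack of freeness deforms the combination into an ellipse.)

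Gaussianity then closes the Schwinger--Dyson equations exactly in the limit (by integration by parts / the Gaussian cumulant expansion, or by operator-valued subordination on the linearized operator): the block-traces of the resolvent solve a finite polynomial system in a few scalar unknowns, with coefficients built from $\lambda,\bar\lambda,\eta,\sigma_x,\sigma_y,\tau,\alpha$. I would then set $\eta=0$ and identify $\partial{\cal S}_M$ as the locus where the ``sub-critical'' branch (the solution with vanishing density component, valid outside the support) ceases to exist --- a merging-of-roots / vanishing-discriminant condition --- and eliminate the auxiliary unknowns. The expected collapse is to the single quadratic relation
\begin{equation*}
\left(\frac{\mathrm{Re}\,\lambda-\sigma_x\sigma_y(1+\alpha)\mathrm{Re}(\tau)}{\sigma_x\sigma_y\sqrt{\alpha}(1+\lvert\tau\rvert^2)}\right)^2+\left(\frac{\mathrm{Im}\,\lambda-\sigma_x\sigma_y(1+\alpha)\mathrm{Im}(\tau)}{\sigma_x\sigma_y\sqrt{\alpha}(1-\lvert\tau\rvert^2)}\right)^2=1,
\end{equation*}
with the filled interior being \eqref{eq:support1}; the asymmetry between the $\mathrm{Re}$ and $\mathrm{Im}$ scalings (the factors $1\pm\lvert\tau\rvert^2$) appears because $\tau$ enters the Dyson equation through terms coupling $\lambda$ with $\bar\lambda$, so that writing $\lambda=u+iv$ the variables $u,v$ separate with those coefficients. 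Consistency checks along the way: $\tau=0$ gives the disk of radius $\sigma_x\sigma_y\sqrt\alpha$ (product of two independent rectangular Ginibre-type matrices); $\lvert\tau\rvert=1$ collapses the imaginary axis to the real interval $[\sigma_x\sigma_y(1-\sqrt\alpha)^2,\ \sigma_x\sigma_y(1+\sqrt\alpha)^2]$, the Marchenko--Pastur support of (a scaling of) $XX^*$ when $Y\propto X$; and $\alpha=1,\tau=0$ recovers the unit disk.

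The principal obstacle is the last step: showing the boundary is \emph{exactly} this ellipse rather than a more intricate real quartic, and that the Brown density is positive throughout the open ellipse and vanishes outside it (no spurious holes or bubbles). Rigorously this needs a stability/uniqueness analysis of the matrix Dyson equation (in the spirit of Erd\H{o}s--Kr\"uger--Schr\"oder) or a direct Brown-measure computation for the limiting operator, together with justification of the linearization and of the self-consistency itself; if the Hermitization self-consistent equation and the Brown-measure formula are taken as inputs, what remains is the (still nontrivial) algebra just outlined.
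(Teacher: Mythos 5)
Your route is genuinely different from the paper's, and it is worth saying what the paper actually does: it does not re-derive the ellipse at all. For $\alpha\ge1$ it invokes the non-Hermitian Wishart result of \cite{akemann2021non} directly, upgrading it only by a multiplicative rescaling of $\lambda$ by $\sigma_x\sigma_y$ and by the remark that the limit depends on $\Sigma=\Sigma_{Re}+\Sigma_{Im}$ alone; for $\alpha<1$ it uses the Weinstein--Aronszajn identity $\det\left(xI-XY^*\right)=x^{N-P}\det\left(xI-Y^*X\right)$, giving the exact finite-$N$ relation $\mu^N_{XY^*}=\left(1-\alpha\right)\delta_0+\alpha\mu^P_{Y^*X}$ and thereby reducing everything to the $\alpha\ge1$ case with ratio $1/\alpha$ (this also delivers the full limiting density for $\alpha<1$, not only the support). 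Your plan --- Girko Hermitization, linearization of the product, Gaussian Schwinger--Dyson equations, boundary from a vanishing-discriminant condition --- is a from-scratch derivation of the kind that underlies the cited result; your rank argument for the atom at $0$ when $\alpha<1$ is correct and is the same observation the paper extracts from Weinstein--Aronszajn, and your consistency checks ($\tau=0$ disk of radius $\sigma_x\sigma_y\sqrt{\alpha}$, $\left|\tau\right|=1$ collapsing to the Marchenko--Pastur interval) are right.

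The gap is that the decisive step is only described, never performed: you do not write down or solve the self-consistent system, and the ellipse in \Cref{eq:support1} enters your argument as the ``expected collapse'' rather than as an output, so as written the proposal establishes only the atom at $0$. Two specific points are also glossed over. First, the claim that the anomalous moments $\mathbb{E}\left[X_{i\mu}^2\right]$ ``never enter'' is not automatic: under \Cref{def2} with $\Sigma_{Re}\ne\Sigma_{Im}$ these moments are nonzero, and in the Gaussian integration by parts they do generate extra (transpose-type) self-energy terms; one must show their contribution vanishes in the limit, which is precisely the universality statement the paper needs in order to extend the $\Sigma_{Re}=\Sigma_{Im}$ computation of \cite{akemann2021non} to the general independent-component case. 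Second, since the theorem asserts a limiting spectral density, Girko's method additionally requires uniform integrability of the log of the small singular values of $XY^*-\lambda I$ (a least-singular-value bound for the product) and a stability/uniqueness analysis pinning the support boundary to the root-merging locus; you flag both but supply neither. So the strategy is plausible, but the proposal is a programme rather than a proof, and completing it would amount to re-proving the result of \cite{akemann2021non} instead of using it as the paper does.
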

The support is an ellipsoid if $\alpha\ge1$, or a union thereof with $0$ if $\alpha<1$. For the case discussed here, $\tau\in\mathbb{R}$ so that $\mathrm{Im}\left(\tau\right)=0$, but this equation is valid in a more general case: for a complex $\tau$ the support is rotated by $\arg\left(\tau\right)$.

\begin{theorem}\label{th2}
For complex paired Gaussian matrices $X, Y$, the support of the limiting spectral density of $M=XY^*$ is $e^{i \arg\left(\tau\right)}{\cal S}_{XY^*}=\left\{\lambda:e^{-i \arg\left(\tau\right)}\lambda\in {\cal S}_{XY^*}\right\}$.
\end{theorem}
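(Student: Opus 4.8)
The plan is to reduce the general complex case of \Cref{def3} to the independent‑components case of \Cref{def2}, which is already settled by \Cref{th1}, in two moves: an exact finite‑$N$ phase symmetry that produces the factor $e^{i\arg\tau}$, and the observation that the limiting spectral density is insensitive to the pseudo‑covariances $\mathbb{E}[x^2],\mathbb{E}[xy],\mathbb{E}[y^2]$, so that only the Hermitian covariance $\Sigma$ (equivalently $\sigma_x,\sigma_y,\tau,\alpha$) survives in the limit.

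\emph{Step 1 (phase reduction).} Write $\tau=|\tau|\,e^{i\theta}$ with $\theta=\arg\tau$ and put $\tilde Y:=e^{i\theta}Y$. The vector $(\mathrm{Re}\,\tilde y,\mathrm{Im}\,\tilde y)$ is an orthogonal linear image of $(\mathrm{Re}\,y,\mathrm{Im}\,y)$, so $(X,\tilde Y)$ is again a pair of complex paired Gaussian matrices as in \Cref{def3}, with unchanged $\sigma_x,\sigma_y$ and Hermitian correlation $\tilde\tau=\mathbb{E}[x\,\overline{\tilde y}]/(\sigma_x\sigma_y)=e^{-i\theta}\tau=|\tau|\ge 0$. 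Moreover $\tilde M:=X\tilde Y^*=e^{-i\theta}XY^*=e^{-i\theta}M$, so the eigenvalues of $\tilde M$ are those of $M$ scaled by $e^{-i\theta}$ at every $N$; hence $\mathcal S_{\tilde M}=e^{-i\theta}\mathcal S_M$, i.e.\ $\mathcal S_M=e^{i\arg\tau}\mathcal S_{\tilde M}$. It therefore suffices to treat $\tau\ge 0$ and show that then $\mathcal S_M=\mathcal S_{XY^*}$, the set of \Cref{th1} (for which $\mathrm{Im}\,\tau=0$, i.e.\ the axis‑aligned ellipse, together with $\{0\}$ when $\alpha<1$).

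\emph{Step 2 (insensitivity to pseudo‑covariances).} With $\tau\ge 0$ fixed, the only way $(X,Y)$ can differ from an independent‑components pair of \Cref{def2} with the same $(\sigma_x,\sigma_y,\tau,\alpha)$ is through nonzero, possibly complex, pseudo‑covariances, and I would argue these do not affect the $N\to\infty$ limit by Hermitization. Namely, $\mu_M$ is the distributional Laplacian of $z\mapsto-\tfrac{1}{2N}\mathbb{E}\log\det\big((M-z)(M-z)^*\big)$, and the relevant object is the limiting singular‑value law of $M-z$, obtained from the resolvent of the $2N\times 2N$ chiral matrix built from $M-z$; equivalently, linearising $\det(M-z)=\det L_z$ with $L_z=\left(\begin{smallmatrix}-zI_N & X\\ -Y^* & I_P\end{smallmatrix}\right)$, one studies the Gaussian sample‑covariance‑type matrix $L_zL_z^*$. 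In either formulation the limiting resolvent solves a self‑consistent (matrix Dyson) equation whose coefficients come from contractions of one Gaussian factor against one conjugated factor — $\mathbb{E}[x\bar x],\mathbb{E}[y\bar y],\mathbb{E}[x\bar y]$, i.e.\ exactly $\sigma_x^2,\sigma_y^2,\tau\sigma_x\sigma_y$ — whereas a contraction of two un‑conjugated factors (the channel through which $\mathbb{E}[x^2],\mathbb{E}[xy],\mathbb{E}[y^2]$ would enter) is forced to cross and hence contributes only at non‑planar, $O(1/N)$ order to any observable balanced in $M$ and $M^*$, as the logarithmic potential and the chiral resolvent are. Consequently the limiting singular‑value law of $M-z$, the logarithmic potential, and therefore $\mu_M$ and $\mathcal S_M$, depend on the entrywise law only through $(\sigma_x,\sigma_y,\tau,\alpha)$; by \Cref{th1} this support is $\mathcal S_{XY^*}$. (Equivalently: the mechanism behind \Cref{th1} never used the block structure of \Cref{def2} beyond its contribution to $\Sigma$, so it applies verbatim to arbitrary $\Gamma$.) Combined with Step 1, $\mathcal S_M=e^{i\arg\tau}\mathcal S_{XY^*}$, which is \Cref{th2}.

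\emph{Main obstacle.} Step 1 is an exact identity and is routine. The substance is Step 2: turning the non‑planar‑suppression heuristic into a proof. This is the complex‑Gaussian analogue of the fact that the circular and elliptic laws are unchanged for non‑circularly‑symmetric entries, and a rigorous treatment requires either a general universality statement for such products of Gaussian matrices, or a stability analysis of the matrix Dyson equation for the Hermitization of $XY^*-z$ showing that its solution — and hence the boundary of $\mathcal S_M$ — is a function of $\Sigma$ alone, together with the customary interpolation/replacement estimates controlling the error uniformly near the support boundary.
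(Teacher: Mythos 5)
Your Step 1 is fine and is essentially the rotational ingredient that the paper also uses: multiplying one of the matrices by a unimodular scalar is an exact finite-$N$ operation that rotates every eigenvalue and replaces $\tau$ by $|\tau|$, producing the factor $e^{i\arg\tau}$. The problem is Step 2, and you say so yourself: the claim that the limiting support of $XY^*$ depends on the entrywise law only through $\Sigma$ (i.e.\ is insensitive to the pseudo-covariances $\mathbb{E}[x^2],\mathbb{E}[xy],\mathbb{E}[y^2]$) is precisely the content that separates \Cref{def3} from \Cref{def2}, and you establish it only at the level of a crossing/non-planarity heuristic plus an appeal to an unstated stability property of the matrix Dyson equation for the Hermitization of $XY^*-z$. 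Neither the self-consistent equation, nor the claim that the unconjugated channel enters it only through same-entry (hence $O(1/N)$) terms, nor the uniform control near the boundary of the support is actually carried out. As submitted, the proposal therefore reduces \Cref{th2} to an unproven universality statement rather than proving it.

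The paper's own route is different in exactly this second step: it does not invoke any asymptotic insensitivity argument, but claims an exact distributional reduction — diagonalising $\Gamma$ by rotations acting on the real/imaginary components of $x$ and of $y$ (equivalently, multiplication of $x$ and $y$ by unimodular constants $c_x,c_y$) together with an overall scalar, so that $c\,c_x\bar{c_y}\,XY^*$ falls under \Cref{def2} and \Cref{th1} applies, the net phase being $\arg\tau$. Your Step 1 is the ``overall phase'' part of this. It is worth saying in your favour that the two available phases can only enforce two of the four reality conditions (reality of $\mathbb{E}[x^2]$, $\mathbb{E}[y^2]$, $\mathbb{E}[xy]$ and $\mathbb{E}[x\bar y]$) that \Cref{def2} imposes, so for generic $\Gamma$ an exact reduction by phase multiplications alone does not dispose of all pseudo-covariances, and some insensitivity argument of the kind you sketch (or a genuinely different reduction) does appear to be needed; but identifying the right issue is not the same as resolving it, and your proposal stops exactly where the real work begins.
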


\begin{theorem}\label{th3}
For paired Gaussian matrices $X, Y$ with $\alpha\ne1$, denoting $\beta=\max\left\{1/\alpha,\alpha\right\}$, the support of the limiting spectral density of $M=XY^\dagger$ is:
\begin{equation}\label{eq:support2}
    {\cal S}_{XY^\dagger} = \left\{0\right\}_{\alpha<1}\cup\left\{\lambda:\left|\lambda-\frac{\sigma_x}{\sigma_y}\tau\right|^2
    \le\frac{\sigma_x^2}{\sigma_y^2}\frac{1-\left|\tau\right|^2}{\beta-1}\right\}
\end{equation}
\end{theorem}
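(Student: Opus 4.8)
The plan is to reduce $M=XY^\dagger$ to a product of a square Gaussian matrix with the inverse square root of an independent Wishart matrix, and then read off the support from the Haagerup--Larsen description of the Brown measure of $R$-diagonal elements. Writing $X=\sigma_x\tilde X,\ Y=\sigma_y\tilde Y$ gives $XY^\dagger=(\sigma_x/\sigma_y)\tilde X\tilde Y^\dagger$, so I would first reduce to $\sigma_x=\sigma_y=1$ and rescale the final support by $\sigma_x/\sigma_y$. Then, regressing the entries of $X$ on those of $Y$, I write $X=\tau Y+\sqrt{1-|\tau|^2}\,W$ with $W$ an independent Gaussian matrix of i.i.d.\ ${\cal N}(0,1/N)$ entries, so $XY^\dagger=\tau\,YY^\dagger+\sqrt{1-|\tau|^2}\,WY^\dagger$. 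For $\alpha>1$, $Y$ a.s.\ has full row rank, hence $YY^\dagger=I_N$ and $M=\tau I_N+\sqrt{1-|\tau|^2}\,WY^\dagger$; for $\alpha<1$, $Y$ a.s.\ has full column rank, so $M$ has rank $P$, contributing an atom of mass $1-\alpha$ at $0$, while its remaining eigenvalues are those of $Y^\dagger X=\tau I_P+\sqrt{1-|\tau|^2}\,Y^\dagger W$ (since $AB$ and $BA$ share their nonzero spectrum).

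\textbf{Step 2: conditional Gaussian identity.} The key observation is that, conditionally on $Y$, the rows of $WY^\dagger$ (resp.\ the columns of $Y^\dagger W$) are i.i.d.\ centred Gaussian with covariance $\tfrac1N(Y^\dagger)^*Y^\dagger=\tfrac1N(YY^*)^{-1}$ (resp.\ $\tfrac1N(Y^*Y)^{-1}$). Hence $WY^\dagger\overset{d}{=}G(YY^*)^{-1/2}$ for $\alpha>1$ and $Y^\dagger W\overset{d}{=}(Y^*Y)^{-1/2}G$ for $\alpha<1$, with $G$ a fresh square matrix (of size $N$, resp.\ $P$) of i.i.d.\ ${\cal N}(0,1/N)$ entries independent of $Y$. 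So with $m=\min(N,P)$ and $L$ the $m\times m$ Wishart matrix ($YY^*$ if $\alpha>1$, $Y^*Y$ if $\alpha<1$), the nonzero spectrum of $M$ equals, up to conjugation, that of $\tfrac{\sigma_x}{\sigma_y}(\tau I+\sqrt{1-|\tau|^2}\,\Phi)$ with $\Phi=GL^{-1/2}$ and $G$ independent of $L$. Writing $\phi=\lim_N\tfrac1m\operatorname{tr}$ for the limiting normalized trace and using that $G^*G$ and $L$ are asymptotically free (bi-unitary invariance of $G$), a short Marchenko--Pastur computation — keeping track that $G$ has entry variance $1/N$, which equals $1/m$ only when $\alpha>1$ — gives in both cases $\phi(\Phi\Phi^*)=\phi(GG^*)\,\phi(L^{-1})=\tfrac1{\beta-1}$, with $\beta=\max\{1/\alpha,\alpha\}$.

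\textbf{Step 3: Brown measure and assembly.} In the limit $\Phi=GL^{-1/2}$ is $R$-diagonal (a circular element times a free positive element), so by the Haagerup--Larsen theorem its Brown measure is rotationally invariant, supported on the annulus of outer radius $\sqrt{\phi(\Phi\Phi^*)}=1/\sqrt{\beta-1}$ and inner radius $\phi((\Phi\Phi^*)^{-1})^{-1/2}$. I would note that the inner radius is $0$, because $\phi((\Phi\Phi^*)^{-1})=\phi(L)\,\phi((G^*G)^{-1})=\infty$, the square Marchenko--Pastur density behaving like $x^{-1/2}$ at the origin. Thus ${\cal S}_\Phi$ is the closed disc of radius $1/\sqrt{\beta-1}$; applying $z\mapsto\tfrac{\sigma_x}{\sigma_y}(\tau+\sqrt{1-|\tau|^2}\,z)$ turns it into $\{|\lambda-\tfrac{\sigma_x}{\sigma_y}\tau|^2\le\tfrac{\sigma_x^2}{\sigma_y^2}\tfrac{1-|\tau|^2}{\beta-1}\}$, and re-adjoining $\{0\}$ when $\alpha<1$ gives exactly \Cref{eq:support2}.

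\textbf{Expected main obstacle.} The genuinely delicate point is legitimising Step 3, namely that the limiting empirical spectral distribution of $\Phi$ (hence of $M$) coincides with its Brown measure and in particular does not leak mass near $0$. I would handle this by Girko's Hermitization together with a polynomial lower bound on the least singular value of $\Phi-z$ for $z\neq0$, which is available for these Gaussian-built ensembles by now-standard arguments. I would also remark that in the real case the $O(\sqrt N)$ real eigenvalues carry vanishing limiting density and so do not enlarge the support, and that a complex $\tau$ merely rotates the disc about the origin, consistently with \Cref{th2}. A fully self-contained alternative, bypassing Brown-measure theory, is to compute the logarithmic potential $z\mapsto\tfrac12\lim_{t\downarrow0}\tfrac1m\log\det((\Phi-z)(\Phi-z)^*+t)$ directly from the conditional Gaussian structure of Step 2 via a deformed Marchenko--Pastur self-consistent equation, and to read off the support as the region where its Laplacian is positive; this is more computational but avoids importing free-probability machinery.
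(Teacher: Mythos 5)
Your route is correct in its calculations and genuinely different from the paper's. The paper never decomposes $X$ at all: it applies the generalised matrix determinant lemma to $p_{XY^\dagger}$, observes that a nonzero eigenvalue $\lambda$ of $XY^\dagger$ is equivalent to $\det\left(Y^*Z_\lambda\right)=0$ with $Z_\lambda=Y-X/\lambda$, and then asks when $0$ lies in the support of the non-Hermitian Wishart product $Y^*Z_\lambda$, which is answered by \Cref{corollary1} (itself a consequence of \Cref{th1}, i.e.\ the result of Akemann et al.) after computing the correlation $\tau_\lambda$ of the pair $\left(y,\,y-x/\lambda\right)$; this step is explicitly conditional on \Cref{conj:support}. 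You instead regress $X$ on $Y$, use $YY^\dagger=I$ (resp.\ $Y^\dagger Y=I$) to reduce to $\tau I+\sqrt{1-\left|\tau\right|^2}\,GL^{-1/2}$ with $G$ Ginibre independent of the Wishart factor $L$, and read the support off Haagerup--Larsen; your trace computations ($\phi\left(\Phi\Phi^*\right)=1/\left(\beta-1\right)$ in both regimes, inner radius $0$, the rank-deficiency atom of mass $1-\alpha$, and the affine map giving \Cref{eq:support2}) all check out. What each approach buys: the paper's reduction leans entirely on the already-known support \Cref{eq:support1} and needs nothing beyond it except the conjecture, while yours identifies an explicit limiting $R$-diagonal object, would yield the full radial density (not just the support), and, since $\Phi=GL^{-1/2}$ is bi-unitarily invariant in distribution, could in principle be made unconditional via the single ring theorem / Girko hermitization with least-singular-value control --- something the paper does not attempt.

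Two caveats you should make explicit. First, the step you yourself flag --- that the limiting empirical spectral distribution of $\Phi$ coincides with its Brown measure --- is exactly the place where the paper instead invokes \Cref{conj:support}; as written, your proof is conditional in the same way, and the "now-standard arguments" would have to be carried out (including verifying the single-ring hypotheses, which is plausible for $\alpha$ bounded away from $1$ because the Marchenko--Pastur law of $L$ has a hard gap at $0$). Second, the exact identity $X=\tau Y+\sqrt{1-\left|\tau\right|^2}\,W$ with $W$ independent of $Y$ holds for real paired Gaussian matrices, but for the complex ensembles of Definitions 2--3 the conditional law of $x$ given $y$ generally involves $\bar{y}$ as well (the pseudo-covariance $\left\langle xy\right\rangle$ vanishes only for special choices of $\Sigma_{Re},\Sigma_{Im}$), so the decomposition is not exact there; you would need either to restrict to the real case and argue universality of the limiting support in $\Sigma$ (as the paper does through \Cref{th1,th2}), or to use a two-term regression $x=ay+b\bar{y}+w$ and check it does not change the limit.
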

The support is a circle if $\alpha>1$, or a union thereof with $0$ if $\alpha<1$, and the case $\alpha=1$ is not covered. 

\begin{conjecture}\label{conj:support}
For paired Gaussian matrices, the support of $\mu_{XY^*}^N$ converges to the support of the limiting $\mu_{XY^*}$.
\end{conjecture}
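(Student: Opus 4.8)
Write $\mathcal S=\mathcal S_{XY^*}$, $M_N=XY^*$, and for $\varepsilon>0$ set $\mathcal S^\varepsilon=\{z:\operatorname{dist}(z,\mathcal S)\le\varepsilon\}$. The statement presupposes that $\mu^N_{XY^*}$ converges weakly (a.s.) to a limit $\mu_{XY^*}$ with support $\mathcal S$, and one inclusion is then immediate: if $z_0\in\mathcal S$ then $\mu_{XY^*}(B(z_0,\varepsilon))>0$, so by the Portmanteau theorem $\liminf_N\mu^N_{XY^*}(B(z_0,\varepsilon))>0$, hence $M_N$ has an eigenvalue in $B(z_0,\varepsilon)$ for all large $N$. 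Thus a.s.\ every point of $\mathcal S$ is eventually within $\varepsilon$ of the spectrum of $M_N$. The whole content of \Cref{conj:support} is therefore the complementary ``no-outliers'' statement: a.s., for every $\varepsilon>0$ and all large $N$, every eigenvalue of $M_N$ lies in $\mathcal S^\varepsilon$. Combining the two gives $\operatorname{dist}_{\mathrm H}(\operatorname{supp}\mu^N_{XY^*},\mathcal S)\to 0$, the claimed convergence.

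I would prove no-outliers by reducing to a uniform lower bound on the least singular value of $M_N-z$. Eigenvalues cannot run to infinity: $\|M_N\|\le\|X\|\,\|Y\|$, and by standard non-asymptotic norm bounds for Gaussian matrices $\|X\|\le\sigma_x(1+\sqrt\alpha)+o(1)$ and $\|Y\|\le\sigma_y(1+\sqrt\alpha)+o(1)$ a.s., so all eigenvalues lie in a fixed disk $D_R$. It then suffices to show that a.s., for all large $N$,
\[
\inf_{z\in D_R\setminus\mathcal S^\varepsilon}s_{\min}(M_N-z)\ \ge\ c(\varepsilon)>0,
\]
because $\det(M_N-z)=0$ at every eigenvalue of $M_N$. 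Since $z\mapsto s_{\min}(M_N-z)$ is $1$-Lipschitz, a polynomially fine net over $D_R$ reduces this to a bound for each fixed $z\notin\mathcal S^{\varepsilon/2}$ holding with overwhelming probability (w.o.p.), together with an a priori estimate $s_{\min}(M_N-z)\ge N^{-A}$ w.o.p.\ (which also feeds the multi-scale proof of the former) to control the net points.

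For fixed $z$ I would obtain the bound by Hermitisation and linearisation. Let $\mathcal H_z=\left(\begin{smallmatrix}0&M_N-z\\(M_N-z)^*&0\end{smallmatrix}\right)$, whose nonzero eigenvalues are $\pm s_i(M_N-z)$, so that $s_{\min}(M_N-z)=\operatorname{dist}(0,\operatorname{spec}\mathcal H_z)$. Using the joint-Gaussian structure, write $Y=\tfrac{\tau\sigma_y}{\sigma_x}X+\sqrt{1-|\tau|^2}\,\sigma_y Z$ with $Z$ an independent Gaussian matrix of the $X$-type, whence $M_N=\tfrac{\bar\tau\sigma_y}{\sigma_x}XX^*+\sqrt{1-|\tau|^2}\,\sigma_y\,XZ^*$, a quadratic expression in the two independent Gaussian matrices $X,Z$; its product terms linearise, so the resolvent of $\mathcal H_z$ is (up to a Schur complement) a block of the resolvent of a larger self-adjoint random matrix $\widehat{\mathcal H}(z)$, affine in $X,Z$, with a deterministic variance/correlation profile. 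For such Kronecker/correlated Gaussian ensembles the matrix Dyson equation has a unique solution and an associated self-consistent density of states $\rho_z$, and --- this being precisely the mechanism behind \eqref{eq:support1} --- $\rho_z$ has a spectral gap around $0$ exactly when $z\notin\mathcal S$. Invoking the local law for such ensembles down to scale $N^{-1+\delta}$, together with stability of the Dyson equation on that gap, yields w.o.p.\ that $\mathcal H_z$ has no eigenvalue in a fixed neighbourhood of $0$, i.e.\ $s_{\min}(M_N-z)\ge c(\varepsilon)$; a union bound over the net gives the inequality uniformly. When $\alpha<1$ one first splits off the deterministic $(N-P)$-dimensional kernel of $M_N$ --- which supplies the $\{0\}_{\alpha<1}$ component of $\mathcal S$ --- and runs the argument on its orthogonal complement.

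The main obstacle is the least-singular-value input for the \emph{non-i.i.d.}\ matrix $M_N-z$: the product $XY^*$ carries a genuine correlation structure, so Ginibre/i.i.d.\ results do not apply off the shelf. I expect the difficulty to split into two sub-steps: (i) the a priori bound $s_{\min}(M_N-z)\ge N^{-A}$ w.o.p., which I would prove by a smoothed-analysis argument conditioning on one Gaussian factor and exploiting the density of the other, with the rank-deficient case $\alpha<1$ treated by hand; and (ii) verifying, from the explicit Dyson-equation solution used for \eqref{eq:support1}, that the self-consistent density $\rho_z$ is genuinely hole-free away from $0$, so that the ``gap iff $z\notin\mathcal S$'' dichotomy is an equivalence and not merely an inclusion --- this is exactly what makes the outer and inner bounds meet and upgrades them to convergence of supports. (The same programme does not obviously transfer to $M=XY^\dagger$ of \Cref{th3}, since the pseudo-inverse is a discontinuous matrix function, which is presumably why \Cref{conj:support} is stated only for $XY^*$.)
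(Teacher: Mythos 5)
First, a point of calibration: the paper does not prove this statement at all --- it is stated as \Cref{conj:support} and left open, with only a one-line suggestion that it might follow from continuity of the Brown measure with respect to a suitable topology of convergence \cite{belinschi2018eigenvalues}. So there is no in-paper proof to compare you against; the only question is whether your programme settles the conjecture on its own, and as written it does not. The inner half of your argument (weak convergence plus Portmanteau, upgraded by a countable net and compactness to Hausdorff convergence of supports) is fine, and your reduction of the outer ``no-outliers'' half to a uniform lower bound on $s_{\min}(M_N-z)$ for $z$ outside $\mathcal S^\varepsilon$, via Hermitisation and linearisation of the quadratic $\frac{\bar\tau\sigma_y}{\sigma_x}XX^*+\sqrt{1-|\tau|^2}\,\sigma_y XZ^*$, is the standard and sensible modern route --- genuinely different from, and more quantitative than, the Brown-measure route the paper gestures at.

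However, the three load-bearing steps are announced rather than carried out, and each is a substantial piece of work, not a routine citation. (i) The a priori bound $s_{\min}(M_N-z)\ge N^{-A}$ with overwhelming probability for the correlated product: conditioning on one Gaussian factor leaves $M_N-z$ an affine image of the other factor with an ill-conditioned (and, for $\alpha<1$, rank-deficient) coefficient structure, so existing i.i.d.-perturbation least-singular-value results do not apply off the shelf; this is exactly the kind of input whose absence is why the statement is a conjecture. (ii) The local law and Dyson-equation stability for the linearised ensemble: results for Kronecker/correlated ensembles and for polynomials in i.i.d.\ matrices exist, but you must actually exhibit the linearisation, verify its flatness/stability hypotheses, and treat the deterministic kernel when $\alpha<1$; moreover, for general complex paired matrices (\Cref{def3}) $Y$ is not a complex-linear combination of $X$ and an independent Gaussian, so you must first reduce to \Cref{def2} by the scalar-rotation argument of \Cref{th2}. (iii) The identification ``$\rho_z$ has a gap at $0$ iff $z\notin\mathcal S$'': the support \eqref{eq:support1} was derived in \cite{akemann2021non} by finite-$N$ determinantal methods, not from a matrix Dyson equation, so equating the self-consistent support of your Hermitisation with $\mathcal S$ is itself an unproven computation --- and without it your outer bound could strictly exceed $\mathcal S$ and the two inclusions would not meet. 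Until (i)--(iii) are done, this is a credible research programme, not a proof of \Cref{conj:support}.
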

This corresponds to the lack of isolated outliers for $XY^*$. For example, this property has been proven for the Ginibre ensemble, where it was further demonstrated that outliers can be created using bounded rank perturbations \cite{tao2013outliers}.
This might be proven by showing that the Brown measure is continuous with respect to the topology of convergence \cite{belinschi2018eigenvalues}.

\section{Proofs}
\begin{proof}[Proof of \Cref{th1} for $\alpha\ge1$] This case is the main result of \cite{akemann2021non}, where it is derived (using a different notation) for $\alpha\ge1$  that $\mu^N_{XY^*}$ converges weakly to a limiting spectral density $\mu_{XY^*}$ with the specified support, with the additional assumptions that $\sigma_x=\sigma_y=1$, and $\Sigma_{Re}=\Sigma_{Im}$. Because the resulting eigenvalues scale multiplicatively with $\sigma_x \sigma_y$, \Cref{eq:support1} is obtained from Eq.\ 1.7 in \cite{akemann2021non} by scaling $\lambda$ into $\lambda/\sigma_x\sigma_y$. Furthermore, their result depends only on $\tau$, the off-diagonal term of $\Sigma=\Sigma_{Re}+\Sigma_{Im}$, and thus generalises to any choice of $\Sigma_{Re}, \Sigma_{Im}$, as in our definitions.
\end{proof}
\begin{proof}[Proof of \Cref{th1} for $\alpha<1$] We note the characteristic polynomial of $XY^*$ can be related to that of $Y^*X$ by the Weinstein-Aronszajn identity $p_{XY^*}\left(x\right)=\det\left(xI-XY^*\right)=x^{N-P}\det\left(xI-Y^*X\right)=x^{N-P}p_{Y^*X}\left(x\right)$ so the eigenvalues of $XY^*$ are $N-P$ zeros, and the $P$ eigenvalues of $Y^*X$. This relation holds exactly for a finite $N$, $\mu^N_{XY^*}\left(\lambda\right)=\left(1-\alpha\right)\delta\left(\lambda\right)+\alpha\mu^P_{XY^*}\left(\lambda\right)$, and thus also for the limiting spectral density. The measure $\mu_{XY^*}$ is supported, according to the first half of the proof, at the following ellipsoid from \Cref{eq:support1} with dimensions ratio $1/\alpha>1$, and additional scaling of $\sigma_x\sigma_y$ to $\frac{P}{N}\sigma_x\sigma_y$ due to correcting the scaling from $\Sigma/N$ in \Cref{def1} into $\Sigma/P$. Those terms cancel, and the ellipsoid support from \Cref{eq:support1} is the same in both cases.
\end{proof}

This also provides the exact limiting spectral density of $XY^*$ for $\alpha<1$, in terms of the known result for $\alpha>1$ \cite{akemann2021non}.

\begin{proof}[Proof of \Cref{th2}] We note it is possible to diagonalise $\Gamma$, a $4\times 4$ positive-definite matrix, using three rotation operations, one applied to components of $x$, one applied to the components of $y$, and one applied at the $2\times 2$ block structure. The latter is equivalent to multiplication by a complex scalar $c$. The former are equivalent to multiplying the complex $x$ (respectively $y$) by a constant $c_x$ (respectively $c_y$) such that $c_x x$ (respectively $c_y y$) are complex Gaussian variables with independent real and imaginary components. As all the components of $X, Y$ are identically distributed, $c c_x\bar{c_y} X Y^*$ satisfy \Cref{def2} and hence their support is given by \Cref{eq:support1}. Furthermore, this constant can be calculated as $\arg\left(\tau\right)$; the norms of the constants would not affect this normalised quantity. Finally, the effect of this multiplication is a rotation of the support around $0$, so that the centre of the ellipsoid moves from $\sigma_x\sigma_y\left(1-\alpha\right)\tau+i0$ in the independent case to $\sigma_x\sigma_y\left(1-\alpha\right)\left(\mathrm{Re}\left(\tau\right)+i\mathrm{Im}\left(\tau\right)\right)$ in the general case, as well as rotation of each $\lambda$ into $e^{i \arg\left(\tau\right)}\lambda$.
\end{proof}

The following corollary can be drawn from \Cref{eq:support1}, which we will use below. It was already noted in \cite{akemann2021non} for $\tau\in\mathbb{R}$.
\begin{corollary}\label{corollary1}For paired Gaussian matrices, 
    $0\in{\cal S}_{XY^*}$ iff $\left|\tau\right|^2\le1/\alpha$.
\end{corollary}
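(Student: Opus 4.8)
The plan is to substitute $\lambda=0$ into the support descriptions of \Cref{th1} and \Cref{th2} and to reduce the resulting condition to the claimed inequality, treating $\alpha<1$ and $\alpha\ge1$ separately. For $\alpha<1$ there is nothing to compute: the term $\left\{0\right\}_{\alpha<1}$ in \Cref{eq:support1} places $0$ in ${\cal S}_{XY^*}$ unconditionally, and since $\left|\tau\right|\le1<1/\alpha$ the inequality $\left|\tau\right|^2\le1/\alpha$ also holds automatically; so both sides of the equivalence are true and the statement is vacuous in this regime.

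For $\alpha\ge1$ the support is the ellipsoid in \Cref{eq:support1} when $X,Y$ have independent components (so $\tau\in\mathbb{R}$), or --- in the general complex case --- its rigid rotation about the origin by $\arg\left(\tau\right)$ according to \Cref{th2}. A rotation about the origin fixes $0$, so in the general case $0\in{\cal S}_{XY^*}$ precisely when it holds for the independent-components ensemble with the same $\left|\tau\right|$; it therefore suffices to work with \Cref{eq:support1} for real $\tau$ and set $\lambda=0$. Then the imaginary-part term vanishes and, using $\mathrm{Re}\left(\tau\right)^2=\left|\tau\right|^2$, the membership condition collapses to $\left(1+\alpha\right)^2\left|\tau\right|^2\le\alpha\left(1+\left|\tau\right|^2\right)^2$.

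The remaining step is the elementary verification that, under the standing constraint $\left|\tau\right|\le1$, this inequality is equivalent to $\left|\tau\right|^2\le1/\alpha$. Writing $t=\left|\tau\right|^2\in\left[0,1\right]$ and expanding, one finds the factorisation $\alpha\left(1+t\right)^2-\left(1+\alpha\right)^2t=\alpha\left(t-\alpha\right)\left(t-1/\alpha\right)$. Since $\alpha\ge1$ we have $t\le1\le\alpha$, so $t-\alpha\le0$ and the product is nonnegative exactly when $t-1/\alpha\le0$, i.e.\ when $\left|\tau\right|^2\le1/\alpha$. Combined with the $\alpha<1$ case, this establishes the corollary.

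The only genuinely delicate point I anticipate is the reduction of the complex-$\tau$ case to the real-$\tau$ case via \Cref{th2}: one must use that the rotation there is centred at the origin (so that it preserves membership of $0$) and that it carries the correlation $\tau$ into $\left|\tau\right|$, rather than substituting a complex $\tau$ directly into the axis-aligned form of \Cref{eq:support1}, which is only literally valid for real $\tau$. Everything else is the quadratic computation above.
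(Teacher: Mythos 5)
Your proposal is correct and follows essentially the same route as the paper: the $\alpha<1$ case is dismissed as trivially true on both sides, the general complex case is reduced to real $\tau$ by noting that the rotation in \Cref{th2} is about the origin and hence fixes $0$, and for $\alpha\ge1$ one substitutes $\lambda=0$ into \Cref{eq:support1}. The only difference is cosmetic: you settle the resulting inequality by factorising the quadratic in $t=\left|\tau\right|^2$, whereas the paper takes square roots and uses the monotonicity of $g\left(x\right)=x/\left(1+x^2\right)$ on $\left[0,1\right]$; both are valid elementary verifications.
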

\begin{proof}
    The rotation in \Cref{th2} is around $0$, so the condition is the same for complex paired Gaussian matrices with or without independent components. For $\alpha<1$, the statement is trivial as both terms are true by definition. For $\alpha\ge1$, the left term becomes $\left|\tau\right|\left(1+\alpha\right)\le\sqrt{\alpha}\left(1+\left|\tau\right|^2\right)$ and denoting $g\left(x\right)=\frac{x}{1+x^2}$ yields $g\left(\left|\tau\right|\right)\le g\left(1/\sqrt{\alpha}\right)$. Thus, $\left|\tau\right|\le1/\sqrt{\alpha}$ from monotonicity of $g\left(x\right)$ for $x\in\left[0,1\right]$.
\end{proof}

We prove \Cref{th3} by showing how the condition $\lambda\in {\cal S}_{XY^\dagger}$ can be reduced to the condition $0\in Y^*Z_\lambda$, , for some matrix $Z_\lambda$, which we already understand from \Cref{corollary1}. The proof requires the yet unproven \Cref{conj:support}.

\begin{proof}[Proof of \Cref{th3} for $\alpha<1$, assuming \Cref{conj:support}] 
In this case $Y^{\dagger}=\left(Y^{*}Y\right)^{-1}Y^{*}$, and from the generalised matrix determinant lemma \cite{brookes2005matrix}, the characteristic polynomial of $XY^{\dagger}$ would be $p_{XY^\dagger}\left(x\right)=\det\left(x I-X\left(Y^{*}Y\right)^{-1}Y^{*}\right)	=\det\left(Y^{*}Y-\frac{1}{x}Y^{*}X\right)\frac{x^N}{\det\left(Y^{*}Y\right)}$ where $\det\left(Y^{*}Y\right)$ is a finite, strictly positive value for $\alpha<1$ from Marchenko-Pastur \cite{marchenko1967distribution}. We note that when $x\to0$ the determinant is dominated by $x^{-P}$ and the characteristic polynomial would have $x^{N-P}$. Thus, at least $N-P$ of the eigenvalues of $XY^\dagger$ are $0$, and this value is included in the support. For $0\ne\lambda\in \mathrm{E.V.}\left(XY^\dagger\right)$ we have that it satisfies $0=\det\left(Y^* Z\right)$ for $Z=Y-X/\lambda$.
Now note that for a fixed $\lambda\ne0$, we can consider a series of $P\times P$ matrices $M_P=Y^* Z$ where $Y, Z$ are paired Gaussian matrices, with dimensions ratio $1/\alpha$. Assuming \Cref{conj:support}, the support of the eigenvalue spectrum of $M_P$ converges for $P\to\infty$ to the support of the limiting density \Cref{eq:support1}, so except for a set whose measure vanishes, by \Cref{corollary1} it is strictly positive for $\left|\tau_\lambda\right|^2<\alpha$, and $0$ otherwise, where $\tau_\lambda=\mathrm{corrcoef}\left(y, y-x/\lambda\right)$.
Using the joint distribution of $x,y$:
\begin{equation}\label{eq:tau_vs_alpha}
    \left|\tau_{\lambda}\right|^{2}	
    =\frac{\left\langle \delta \bar{y}\delta\left(y-x / \lambda\right)\right\rangle \left\langle \delta y\delta\overline{y-x / \lambda}\right\rangle }{\left\langle \delta y\delta \bar{y}\right\rangle \left\langle \delta\left(y-x / \lambda\right)\delta\overline{y-x / \lambda}\right\rangle }	=\frac{\sigma_{y}^{2}-2\sigma_{x}\sigma_{y}\mathrm{Re}\left(\tau/\lambda\right)+\left|\tau\right|^{2}\sigma_{x}^{2}/\left|\lambda\right|^{2}}{\sigma_{y}^{2}-2\sigma_{x}\sigma_{y}\mathrm{Re}\left(\tau/\lambda\right)+\sigma_{x}^{2}/\left|\lambda\right|^{2}}
\end{equation}
and substituting $\mathrm{Re}\left(\tau/\lambda\right)=\left(\mathrm{Re}\tau\mathrm{Re}\lambda+\mathrm{Im}\tau\mathrm{Im}\lambda\right)/\left|\lambda\right|^2$ the condition on $\lambda$ becomes:
\begin{equation}\label{eq:resulting_support}
    \left(1-\alpha\right)\left|\lambda\right|^{2}\sigma_{y}^{2}-2\left(1-\alpha\right)\sigma_{x}\sigma_{y}\left(\mathrm{Re}\tau\mathrm{Re}\lambda+\mathrm{Im}\tau\mathrm{Im}\lambda\right) + \left(\left|\tau\right|^{2}-\alpha\right)\sigma_{x}^{2}\le 0
\end{equation}
which can be rewritten as a circular law $\left|\lambda-c\right|^2 \le r^2$ with a centre $c=\tau\frac{\sigma_{x}}{\sigma_{y}}$ and square radius $r^2=\frac{\sigma_{x}^{2}}{\sigma_{y}^{2}}\left(1-\tau^{2}\right)\frac{\alpha}{1-\alpha}$, so \Cref{eq:support2} follows for $\beta=1/\alpha$.
\end{proof}

\begin{proof}[Proof of \Cref{th3} for $\alpha>1$, assuming \Cref{conj:support}]
In this case $Y^{\dagger}=Y^{*}\left(YY^{*}\right)^{-1}$, and from the generalised matrix determinant lemma \cite{brookes2005matrix} the characteristic polynomial is $p_{XY^\dagger}\left(x\right)=\det\left(x I-XY^{*}\left(YY^{*}\right)^{-1}\right)	=\det\left(YY^{*}-\frac{1}{x}XY^{*}\right)\frac{x^N}{\det\left(YY^{*}\right)}$. 
It is not expected to have zeros at $x=0$, as the determinant would contribute $x^{-N}$ for $x\to0$. For $0\ne\lambda\in \mathrm{E.V.}\left(XY^\dagger\right)$, we have that it satisfies $0=\det\left(Z Y^*\right)$ for $Z=Y-X/\lambda$, and the argument continues as in $\alpha<1$. Here,
for a fixed $\lambda\ne0$, we can consider a series of $N\times N$ matrices $M_N=Z Y^*$ where $Y, Z$ are paired Gaussian matrices, with dimensions ratio $\alpha$ (instead of $1/\alpha$ in the $\alpha<1$ case). 
Assuming \Cref{conj:support}, the support of the eigenvalue spectrum of $M_N$ converges for $N\to\infty$ to the support of the limiting density \Cref{eq:support1}, so except for a set whose measure vanishes, by  \Cref{corollary1} it is strictly positive for $\left|\tau_\lambda\right|^2<\alpha$, and $0$ otherwise, so that \Cref{eq:tau_vs_alpha} is unmodified and \Cref{eq:resulting_support} has $1/\alpha$ terms instead of $\alpha$ terms. \Cref{eq:support2} follows with $\beta=\alpha$.
\end{proof}

We note that the above approach for \Cref{th3} does not apply to $\alpha=1$, as $Y^\dagger=Y^{-1}$ in this case. 

\section*{Acknowledgements}
I am grateful to Ofer Zeitouni for his valuable feedback on the manuscript and for clarifying the assumptions which need to be made in \Cref{conj:support}. I would like to thank Máté Lengyel and Yashar Ahmadian for many useful discussions and for their guidance and support. This work was supported by the Blavatnik Cambridge Postdoctoral Fellowships.
\clearpage
 


\bibliographystyle{unsrt}
\bibliography{references}
\end{document}